\theoremstyle{plain}
\newtheorem{thm}{Theorem}[section]
\newtheorem{lem}{Lemma}[section]
\theoremstyle{definition}
\newtheorem{hypo}{Hypothesis}[section]
\theoremstyle{remark}
\newtheorem{rem}{Remark}[section]
\numberwithin{equation}{section}
\newcommand{\Om}{\Omega}
\newcommand{\R}{{\mathbb R}}
\newcommand{\C}{{\mathbb C}}
\newcommand{\D}{\partial}
\newcommand{\dA}{\delta A_0}
\title[Reconstruction for anisotropic inverse conductivity problems]{A direct reconstruction algorithm for the anisotropic inverse conductivity problem based on Calder\'on's method in the plane}
\author[R. Murthy]{Rashmi Murthy}
\address{Department of Mathematics, University of Helsinki, Finland}
\email{rashmi.murthy@helsinki.fi}
\author[Y.-H. Lin]{Yi-Hsuan Lin}
\address{Department of Applied Mathematics, National Chiao Tung University, Hsinchu, Taiwan}
\curraddr{}
\email{yihsuanlin3@gmail.com}
\author[K. Shin]{Kwancheol Shin}
\address{Department of Mathematics, Colorado State University, USA}
\email{kcshin@rams.colostate.edu}
\author[J. Mueller]{Jennifer L. Mueller}
\address{Department of Mathematics and School of Biomedical Engineering, Colorado State University, USA}
\email{mueller@math.colostate.edu}
\begin{document}
	
\begin{abstract} A direct reconstruction algorithm based on Calder\'on's linearization method for the reconstruction of isotropic conductivities is proposed for anisotropic conductivities in two-dimensions. To overcome the non-uniqueness of the anisotropic inverse conductivity problem, the entries of the unperturbed anisotropic tensors are assumed known \emph{a priori}, and it remains to reconstruct the multiplicative scalar field. The quasi-conformal map in the plane facilitates the Calder\'on-based approach for anisotropic conductivities. The method is demonstrated on discontinuous radially symmetric conductivities of high and low contrast.

\noindent{\bf Keywords}. Calder\'on's problem, anisotropic, electrical impedance tomography, quasi-conformal map, exponential solutions, inverse conductivity problem, Dirichlet-to-Neumann map


\end{abstract}
	
	\maketitle

	\tableofcontents

\section{Introduction} \label{sec:intro}


The inverse conductivity problem was first proposed in Calder\'on's pioneer work \cite{calderon2006inverse}, in which the existence of a unique solution and a direct method of reconstruction of isotropic conductivities from the associated boundary measurements was given for the linearized problem in a bounded domain. This work inspired a large body of research on the global uniqueness question for the inverse conductivity problem and methods of reconstruction.  Calder\'on made use of special exponentially growing functions known as \emph{complex geometrical optics} solutions (CGOs), and these have proved useful for global uniqueness results for the inverse conductivity problem, leading also to a family of direct reconstruction algorithms known as D-bar methods. The reader is referred to \cite{Mueller2020, MuellerBook} and the references therein for further reading on D-bar methods.  Their relationship to Calder\'on's method was investigated in  \cite{knudsen2008reconstructions}.

The inverse conductivity problem is the mathematical model behind a medical imaging technology known as  \emph{electrical impedance tomography} (EIT). EIT has the attributes of being inexpensive, non-invasive, non-ionizing and portable.  Medical applications of EIT include pulmonary imaging \cite{panoutsos2007electrical,frerichs2009assessment,victorino2004imbalances,Martins2019}, breast cancer detection \cite{1344192,cherepenin20013d,kao20063d,kerner2002electrical,zou2003review}, human head imaging \cite{yerworth2003electrical,agnelli2020classification,Boverman, Malone2014}, and others.  See also the review articles \cite{lionheart2004eit,cheney1999electrical,hanke2003recent,brown2003electrical} on EIT.
The literature of reconstruction algorithms for the isotropic conductivity is extensive.  Implementations of Calder\'on's method for isotropic conductivities include \cite{bikowski20082d, Muller1, Peter_2017}.   However, in reality, many tissues in the human body are anisotropic, meaning that electrical current will be conducted in spatially preferred directions.
Anisotropic conductivity distributions are prevalent in the body, with significant differences in the transverse and lateral directions in the bone, skeletal and cardiac muscle, and in the brain \cite{abascal2008use, barber1990quantification}. 
However, medical EIT imaging typically neglects anisotropic properties of the conductivity and reconstructs an isotropic approximation.  This can lead to artifacts in the resulting images and incorrect estimates of conductivity values.

In this work, we want to study the anisotropic conductivity equation 
\begin{align}\label{anisotropic conductivity equation in Sec 1}
	L_A u :=\nabla \cdot (A(x)\nabla u)=\sum_{j,k=1}^2 \D_{x_k}\left( A^{jk}(x)\D_{x_j}u\right)=0 \quad \text{ in }\Omega,
\end{align}
where $\Omega\subset \R^2$ is a bounded Lipschitz domain and $A(x)=(A^{jk}(x))_{1\leq j,k\leq 2}$ is a positive definite symmetric matrix. The explicit regularity assumptions of $A(x)$ will be characterized later (see Hypothesis \ref{hypothesis of A}). In order to study inverse problems in anisotropic media, the key step is to transform the anisotropic conductivity equation \eqref{anisotropic conductivity equation in Sec 1} into an isotropic one, by using the \emph{isothermal coordinates}, which are related to \emph{quasi-conformal maps} \cite{ahlfors2006lectures}. This method is widely used in the study of the Calder\'on type inverse problems in the plane.

Contrary to the isotropic case, knowledge of the \emph{Dirichlet-to-Neumann} (DN) map is not sufficient to recover an anisotropic conductivity   \cite{greenleaf2003anisotropic, kohn1983identification}. The non-uniqueness of the anisotropic problem stems from the fact that any diffeomorphism of $\Omega$ which keeps the boundary points fixed has the property of leaving the DN map unchanged (i.e., one can find a diffeomorphism with the boundary map being identity), despite the change in conductivity \cite{astala2006calderon}. 
However, a uniqueness result was proved in \cite{lionheart1997conformal} under the assumption that the conductivity is known up to a multiplicative scalar field. The uniqueness and stability results for an anisotropic conductivity of the form $ A = A(x,a(x)) $, where $a(x)$ is an unknown scalar function was proved in \cite{alessandrini2001determining}. In the same paper, the uniqueness result for the interior conductivities of $A$ were also proved by piecewise analytic perturbations of scalar term $a$. {\em A priori} knowledge of the preferred directions of conductivity, or the entries of the tensor of anisotropy, may be obtained, for example, from a diffusion tensor MRI, as discussed in 
\cite{abascal2011electrical}.

In this work, we provide a direct reconstruction algorithm under the assumption that the anisotropic conductivity is a ``small perturbation" (under the matrix notion) of a given $2\times 2$ positive definite matrix with constant entries in the plane.
Other approaches to the reconstruction problem for anisotropic conductivities include \cite{Breckon,Glidewell2D, Glidewell3D,abascal2008use,HamiltonReyes2016, Lionheart_2010}. 
The quasi-conformal map in the plane and \emph{invariance} of the equations under a change of coordinates is the key to the algorithm. But, we note that in three spatial dimensions, there is no quasi-conformal map, and so our approach is not applicable.

CGO solutions have also been used to solve the inverse obstacle problem under various mathematical models.  For the isotropic case, see, for example, \cite{KS2014,KSU2011,nakamura2007identification,SW2006,SY2012reconstruction,UW2008}. For the anisotropic case in the three-dimensions, one can use the \emph{oscillating-decaying solutions} to reconstruct unknown obstacles in a given medium, see \cite{KLS2015enclosure,lin2014reconstruction,NUW2005(ODS),NUW2006}. 
It is worth mentioning that for the anisotropic case in the plane, one can also construct CGOs via the quasi-conformal map, and we refer readers to \cite{takuwa2008complex} for more detailed discussions.

The paper is organized as follows. Section \ref{sec:model} contains the mathematical formulation of the anisotropic inverse conductivity problem. In Section \ref{sec:2}, we provide a rigorous mathematical analysis for the anisotropic elliptic problem. We prove that the linearization of the quadratic form is injective when evaluated at any positive definite $2\times 2$ constant matrix. The tool is to use the quasi-conformal map in the plane. In Section \ref{sec:3}, we provide a reconstruction methods based on Calder\'on's approach for anisotropic conductivities. In addition, we also provide a numerical implementation for the inverse anisotropic conductivity problem in Section \ref{sec:4}.

\section{Mathematical formulation} \label{sec:model}

The mathematical model for the EIT problem with an anisotropic conductivity can be formulated as follows:
Let $\Omega \subset \R^2$ be a simply connected domain with Lipschitz boundary $\partial \Omega$. Assume that the following conditions hold.
\begin{hypo}\label{hypothesis of A}
	Let $ A(x) = \left(A^{jk}(x)\right)_{1\leq j,k\leq 2}$ be the anisotropic conductivity, which satisfies:
	\begin{itemize}
		\item[(a)] Symmetry: $A^{jk}(x)=A^{kj}(x)$ for $x\in \Omega$.
		
		\item[(b)] Ellipticity: There is a universal constant $\lambda>0$ such that 
		
		$$
		\sum_{j,k=1}^2 A^{jk}(x)\xi_j \xi _k \geq \lambda |\xi|^2, \text{ for any }x\in \Omega \text{ and }\xi=(\xi_1,\xi_2) \in \R^2.
		$$
		
		\item[(c)] Smoothness: The anisotropic conductivity $A\in C^1(\overline\Omega;\R^{2\times 2})$.
		
	\end{itemize}
\end{hypo}
Let $ \phi \in H^{1/2}(\D \Omega)$ be the voltage given on the boundary. The electric field $u$ arising from the applied voltage $\phi$ on $\partial\Omega$ is governed by the following second order elliptic equation
\begin{align}\label{anisogenLap}
\begin{cases}
\nabla \cdot (A \nabla u) = \sum_{j,k = 1}^{2} \frac{\D}{\D x^j}\left( A^{jk} (x) \frac{\D}{\D x^k}u \right)= 0 &\text{ in } \Omega, \\
u= \phi  &\text{ on } \D \Omega.
\end{cases}
\end{align}
Under Hypothesis \ref{hypothesis of A} for $A $, given any Dirichlet data $\phi$ on $\D \Omega$, it is known that \eqref{anisogenLap} is well-posed (for example, see \cite{gilbarg2015elliptic}). Therefore, the DN map is well-defined and is given by
\begin{align*}
\Lambda_A:H^{1/2}(\D\Omega)&\to H^{-1/2}(\D\Omega)\\
\Lambda_{A}: \phi &\mapsto  \nu \cdot A\nabla u|_{\D \Omega}= \left.\sum_{j,k =1}^{2} A^{jk}(x) \frac{\D u}{\D x^j}\nu_k\right|_{\D\Omega} ,
\end{align*} 
where
$u \in H^{1} (\Omega)$ is the solution of \eqref{anisogenLap}, and $\nu=(\nu_1,\nu_2)$ is the unit outer normal vector on $\D \Omega$. It is natural to consider the quadratic form $Q_{A, \Omega}(\phi)$ with respect to \eqref{anisogenLap}, which is defined by 
\begin{equation}\label{poweraniso}
Q_{A,\Omega}(\phi) := \int_{\Omega} \sum_{j,k =1}^{2} A^{jk}(x) \frac{\D u}{\D x^j} \frac{\D u}{\D x^k}\, dx = \int_{\D \Omega} \Lambda_{A} (\phi) \phi \, dS,
\end{equation}
where $dS$ denotes the arc length on $\D \Omega $ and we have utilized the integration by parts in the last equality. The quantity $Q_{A, \Omega}$ represents the power needed to maintain the potential $\phi$ on $\D \Omega$. By the symmetry of the matrix $A $, knowledge of $Q_{A, \Omega}$ is equivalent to knowing $\Lambda_{A}$. 

The inverse problem of anisotropic EIT is to ask whether $A$ is uniquely determined by the quadratic form $Q_{A,\Omega}$ and if so, how to calculate the matrix-valued function $A$ in terms of $Q_{A,\Omega}$. 

\section{Injectivity of the inverse problem} \label{sec:2}

Assume the conductivity $A(x)$ satisfies Hypothesis \ref{hypothesis of A} and 
$A(x)$ is of the form 
$$A = a(x)A_0, $$ 
where $a(x)$ is a scalar function to be determined and $A_0$ is a known \emph{constant} $2\times 2$ symmetric positive definite anisotropic tensor.  Then by \cite{lionheart1997conformal}, the inverse anisotropic conductivity problem of determining $a(x)$ has a unique solution.
		
Introduce the norms in the space of $A$ and in the space of quadratic forms $Q_{A}\equiv Q_{A,\Omega}(\phi)$ as follows. Let 
\begin{equation}\label{norm of phi}
\| \phi \|^{2} := \int_{\Omega} |\nabla u|^2 dx,
\end{equation}
where $u\in H^1(\Omega)$ is the solution of the following second order elliptic equation with constant matrix-valued coefficient,  
\begin{align}\label{equ u}
\begin{cases}
\nabla \cdot (A_0 \nabla u) =  0 &\text{ in }\Omega,\\
u= \phi  & \text{ on }\D\Omega, 
\end{cases}
\end{align}
and 
\begin{equation}
\|Q_A \| = \sup_{ {\| \phi \| \leq 1}} \left|Q_A(\phi)\right|.
\end{equation}

In the spirit of Calder\'on \cite{calderon2006inverse}, the next step is to show that  the mapping from conductivity to power, namely 
\begin{equation} \label{phi}
\Phi : A \rightarrow Q_{A},
\end{equation} 
is analytic, where the conductivity to power map $Q_{A} $ is given by \eqref{poweraniso}.   The argument outlined below establishes that $\Phi$ is analytic.

Consider  $A(x)$ as a perturbation from the constant matrix $A_0$ of the form
\begin{align}\label{definition of conductivity}
A(x): =(1+\delta(x))A_0,
\end{align}  
where $\delta(x)$ is regarded as a \emph{scalar} perturbation function. We further assume that $\|\delta\|_{L^\infty(\Omega)} <1$ is sufficiently small so that the matrix $A$ is also positive definite in order to derive the well-posedness of the following second order elliptic equation 
\begin{align}\label{equ w}
\begin{cases}
L_{A}(w):= \nabla \cdot (A(x) \nabla w) = 0 & \text{ in }\Omega, \\
 w= \phi & \text{ on } \D \Omega.
\end{cases}
\end{align}
As in  Calder\'on \cite{calderon2006inverse} for the isotropic case, we will use perturbation arguments. Let $w :=u+v $, where $w$ is the solution of \eqref{equ w}, and $u$ is the solution of \eqref{equ u} with the same boundary data $w=u=\phi$ on $\D \Omega$. Then 
\begin{equation}\label{eqn13}
L_A (w) = L_{(1+\delta(x))A_0} (u+v) = L_{A_0} v+L_{\dA}v + L_{\dA}u = 0 \quad \text{ in }\Omega,
\end{equation} 
and  $v\vert_{\partial \Omega} = 0$. Then we have the following estimate for the function $v$.

\begin{lem}
The operator  $L_{A_0} $ has a bounded inverse operator $G$ and $v$ has the following $H^1$ bound
\begin{equation} \label{v_bound}
\|v\|_{H^1(\Omega)} \leq \frac{\|G\|_{\mathcal L(H^{-1};H^1)} \|\delta\|_{\infty}\|A_0\|_F \|\phi\|}{1-\|G\| \|\delta\|_{\infty}\|A_0\| },
\end{equation}
where $\|G\|_{\mathcal L(H^{-1};H^1)}$ denotes the operator norm from $H^{-1}$ to $H^1$, $\|A_0\|_F$ stands for the Frobenius norm of the matrix $A_0$ and $\|\phi\|$ is given by \eqref{norm of phi}.
\end{lem}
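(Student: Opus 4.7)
The plan is to decompose the lemma into two tasks: first construct the bounded inverse $G$ of $L_{A_0}$ subject to zero Dirichlet data, then extract the $H^1$ bound for $v$ via a Neumann-series / perturbation argument based on the identity \eqref{eqn13}.

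For the first task, I would apply the Lax--Milgram theorem to the bilinear form
\begin{equation*}
B(w_1,w_2) := \int_\Omega A_0 \nabla w_1 \cdot \nabla w_2 \, dx
\end{equation*}
on $H^1_0(\Omega)\times H^1_0(\Omega)$. Symmetry of $A_0$ makes $B$ symmetric; the ellipticity hypothesis (b) together with the Poincar\'e inequality on the bounded Lipschitz domain $\Omega$ yields coercivity $B(w,w)\geq c\|w\|_{H^1}^2$; boundedness follows from Cauchy--Schwarz using the constant entries of $A_0$. Hence for every $F\in H^{-1}(\Omega)$ there is a unique weak solution $w\in H^1_0(\Omega)$ to $L_{A_0}w=F$, producing a bounded inverse $G:H^{-1}(\Omega)\to H^1_0(\Omega)\subset H^1(\Omega)$ with finite norm $\|G\|_{\mathcal{L}(H^{-1};H^1)}$.

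Next, I would rewrite \eqref{eqn13} as $L_{A_0}v=-L_{\delta A_0}(v+u)$ and, since $v|_{\partial\Omega}=0$, apply $G$ to obtain the fixed-point identity
\begin{equation*}
v = -G\bigl(L_{\delta A_0}v\bigr) - G\bigl(L_{\delta A_0}u\bigr).
\end{equation*}
The quantitative input is the operator bound $\|L_{\delta A_0}\|_{\mathcal{L}(H^1;H^{-1})} \leq \|\delta\|_\infty\|A_0\|_F$, which I would verify by pairing against a test function $\psi\in H^1_0(\Omega)$, writing
\begin{equation*}
\langle L_{\delta A_0}f,\psi\rangle = -\int_\Omega \delta(x)\,A_0\nabla f\cdot\nabla\psi\,dx,
\end{equation*}
and applying Cauchy--Schwarz with the elementary matrix inequality $|A_0\xi|\leq\|A_0\|_F|\xi|$. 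Taking $H^1$ norms in the fixed-point identity gives
\begin{equation*}
\|v\|_{H^1} \leq \|G\|\,\|\delta\|_\infty\|A_0\|_F\bigl(\|v\|_{H^1} + \|\nabla u\|_{L^2}\bigr),
\end{equation*}
and solving for $\|v\|_{H^1}$ under the smallness assumption $\|G\|\|\delta\|_\infty\|A_0\|<1$, which is guaranteed by the hypothesis that $\|\delta\|_\infty$ is sufficiently small, produces \eqref{v_bound} once $\|\nabla u\|_{L^2}=\|\phi\|$ is substituted from \eqref{norm of phi}.

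The main technical point is careful bookkeeping of the duality pairing for $L_{\delta A_0}$: since $\delta$ is only $L^\infty$, the strong form $\nabla\cdot(\delta A_0\nabla f)$ must be interpreted weakly, and the bound above follows directly from that weak formulation. A secondary point is the mild discrepancy between the Frobenius norm $\|A_0\|_F$ in the numerator and the operator norm $\|A_0\|$ in the denominator of \eqref{v_bound}: the numerator reflects the crude Cauchy--Schwarz estimate used on $L_{\delta A_0}u$, while the denominator encodes the contraction constant controlling the self-mapping on $v$; the statement is consistent since $\|A_0\|\leq\|A_0\|_F$, so any such mixing of norms gives a valid bound of the stated form.
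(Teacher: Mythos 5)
Your proposal is correct and follows essentially the same route as the paper: invertibility of $L_{A_0}$ (which you justify more carefully via Lax--Milgram, where the paper merely asserts it from well-posedness), the fixed-point identity $v=-G L_{\delta A_0}v-G L_{\delta A_0}u$, the duality estimate $\|L_{\delta A_0}f\|_{H^{-1}(\Omega)}\leq \|\delta\|_\infty\|A_0\|_F\|\nabla f\|_{L^2(\Omega)}$, and absorption of the $v$-term under the smallness condition, which the paper writes out as a convergent Neumann series together with a reverse triangle inequality. One caveat: the Frobenius-versus-operator-norm mismatch you flag is already present in the paper (its own derived estimate \eqref{estimate for v} carries $\|\delta(x)A_0\|_{L^\infty(\Omega)}$ in both numerator and denominator), but your closing justification runs in the wrong direction---since $\|A_0\|\leq\|A_0\|_F$, a denominator of the form $1-\|G\|\|\delta\|_\infty\|A_0\|_F$ gives a \emph{weaker} bound than the one stated in \eqref{v_bound}, so the mixed-norm statement does not follow from your estimate, though this cosmetic discrepancy is inherited from the lemma itself and is immaterial to its use (establishing analyticity of $\Phi$ at $A_0$).
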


\begin{proof}
Since $u\in H^1(\Omega)$ is the unique solution of the boundary value problem $L_{A_0} u = 0$ in $\Omega$, $ u = \phi$ on $\D \Omega$, the operator $L_{A_0}$ has a bounded inverse $G$.

Then from \eqref{eqn13} one sees,
\begin{equation}\label{relation 1}
G\left(L_{A_0}v +L_{\delta A_0} v +L_{\delta A_0} u \right)= 0.
\end{equation}
That is, 
\begin{equation}\label{representatiln of v}
\left(I+G L_{\delta A_0}\right)v = - G L_{\delta A_0} u.
\end{equation}
Note that 
\begin{equation}\label{nL}
\| L_{\delta A_0} w \|_{H^{-1}(\Omega)}= \sup_{\psi \in H^{1}_{0} (\Omega)} \frac{\left|\int_{\Omega} \nabla \cdot [(\delta(x) A_0) \nabla w] \psi \, dx\right|}{\| \psi\|_{H^1_0(\Omega)}},
\end{equation}
and 
\begin{align}\label{inequality 1}
\begin{split}
\left|\int_{\Omega} \nabla \cdot [ (\delta(x) A_0) \nabla w]\psi \, dx \right| = &\left| \int_{\Omega} \nabla \psi \cdot [(\delta(x) A_0) \nabla w] \, dx \right| \\
\leq& \| \delta(x) A_0 \|_{L^\infty(\Omega)}   \|\nabla w \|_{L^2(\Omega)}\|\psi\|_{H^1_0(\Omega)}.
\end{split}
\end{align}
Thus, from \eqref{nL} and \eqref{inequality 1}, 
\begin{equation}\label{relation 2}
\| L_{\delta A_0} w \|_{H^{-1}(\Omega)} \leq\| \delta(x) A_0 \|_{L^\infty(\Omega)}   \|\nabla w \|_{L^2(\Omega)}.
\end{equation}
Next, consider the operator norm 
\begin{align}
\|G L_{\delta A_0} \|_{\mathcal{L}(H^1;H^1)} = \sup_{w \neq 0} \frac{\| GL_{\delta A_0} w\|_{H^1(\Omega)}}{\|w\|_{H^1(\Omega)}}\leq \frac {\|G\|_{\mathcal{H^{-1};H^1}} \|L_{\delta(x) A_0} w \|_{H^{-1}(\Omega)}}{\| w \|_{H^1(\Omega)}}.
\end{align}
That is, 
\begin{equation}
\|G L_{\delta A_0} \|_{\mathcal L(H^1;H^1)} \leq \|G\|_{\mathcal L(H^{-1};H^1)} \| \delta A_0 \|_{L^\infty(\Omega)},
\end{equation}
where $\|\cdot \|_{\mathcal L(X;Y)}$ stands for the operator norm from the Banach space $X$ to $Y$.
Moreover, when  
$$\| \delta(x) A_0 \|_{L^\infty(\Omega)} < \dfrac{1}{\|G\|_{\mathcal L(H^{-1};H^1)}} ,$$
the Neumann series 
$$ 
\left[\sum_{j = 0}^{\infty} (-1)^j(GL_{\delta A_0})^j\right] \left(GL_{\delta A_0}u\right),
$$
converges, and from \eqref{representatiln of v}, one has 
\begin{equation}
v = - \left[ \sum_{j = 0}^{\infty} (-1)^j (GL_{\delta A_0})^j \right](GL_{\delta A_0} u).
\end{equation} 
It is easy to see that 
\begin{align}\label{relation 3}
\notag\|v+GL_{\delta A_0} v \|_{H^1(\Omega)} \geq & \|v\|_{H^1(\Omega)} - \|GL_{\delta A_0} v \|_{H^1(\Omega)} \\
\geq  &\|v\|_{H^1(\Omega)}\left(1- \|G\|_{\mathcal L(H^{-1};H^1)} \|\delta(x) A_0\|_{L^\infty(\Omega)}\right).
\end{align}  
Finally, by using relations \eqref{relation 1} and \eqref{relation 2}, 
\begin{align*}
&\quad \left(1-\|G\|_{\mathcal L(H^{-1};H^1)} \| \delta(x) A_0\|_{L^\infty(\Omega)}\right) \|v\|_{H^1(\Omega)} \\
&\leq \|v + G (-L_{\delta A_0}u - L_{A_0} v)\|_{H^1(\Omega)} \\
&= \|v - G L_{\delta A_0} u -v\|_{H^1(\Omega)}\\
&\leq \|G\|_{\mathcal L(H^{-1};H^1)} \|L_{\delta A_0} u \|_{H^{-1}(\Omega)}\\
&= \|G\|_{\mathcal L(H^{-1};H^1)} \| \delta(x) A_0 \|_{L^\infty(\Omega)} \| \phi\|,
\end{align*}
and substituting \eqref{relation 3} into the above inequality results in
\begin{equation}\label{estimate for v}
\|v \|_{H^1(\Omega)} \leq \frac{\|G\|_{\mathcal L(H^{-1};H^1)} \| \delta(x) A_0 \|_{L^\infty(\Omega)} \| \phi \|}{1- \|G\|_{\mathcal L(H^{-1};H^1)} \| \delta(x) A_0\|_{L^\infty(\Omega)}}.
\end{equation}
Therefore, the above calculations allow us to conclude that the mapping  $\Phi $ defined by \eqref{phi} is analytic at $A_{0}$, which completes the proof.
\end{proof}

Next,  let us linearize the map $Q_{A}(\phi) $ around a positive definite constant matrix $A(x) = A_0 $ as follows: 
\begin{align}
\notag Q_{(1+ \delta(x)) A_0}(\phi) = & \int_{\Omega} \left[(1 + \delta(x))A_0 \nabla w\right] \cdot \nabla w \, dx \\
\notag=& \int_{\Omega} \left[(A_0 +\delta(x) A_0)\nabla u\right] \cdot \nabla u \, dx + 2\int_{\Omega} (\delta(x) A_0\nabla u) \cdot \nabla v \, dx\\
&+\int_\Omega \left[(A_0+\delta(x) A_0)\nabla v\right]\cdot \nabla v \, dx,
\end{align}
where we have used that $\nabla \cdot (A_0\nabla u)=0$ in $\Omega$.
We now show that 
$$ 
\int_{\Omega} (\delta(x) A_0 \nabla u) \cdot \nabla v \, dx \quad \text{ and } \quad \int_\Omega \left[(A_0+\delta(x) A_0)\nabla v\right]\cdot \nabla v \, dx
$$ 
are of $\mathcal{O}(\| \delta  \|) $. It is easy to see that 
\begin{align}\label{inequality v 1}
\left|\int_\Omega (\delta(x) A_0)\nabla u\cdot \nabla v\, dx\right|\leq C_{A_0} \|\delta\|_{L^\infty(\Omega)}\|\phi \|\|v\|_{H^1(\Omega)},
\end{align}
and 
\begin{align}\label{ineqaulity v 2}
\left|\int_\Omega \left[(A_0+\delta(x) A_0)\nabla v\right]\cdot \nabla v \, dx\right|\leq C_{A_0}\left(1+\|\delta\|_{L^\infty(\Omega)}\right)\|v\|_{H^1(\Omega)}^2,
\end{align}
for some constant $C_{A_0}>0$ independent of $\delta$. By inserting \eqref{estimate for v} into \eqref{inequality v 1}, \eqref{ineqaulity v 2} and taking $\|\delta\|_{L^\infty (\Omega)}$ sufficiently small, one obtains the desired result.
Thus, the Fr\'echet derivative of the quadratic form $Q_A(\phi)$  at $A(x)=A_0$ is given by 
\begin{equation}\label{dQ1}
dQ_{A}(\phi)\Big\vert_{A = A_{0}} = \int_\Omega \left((\delta(x) A_0 \right) \nabla u) \cdot \nabla u \, dx, 
\end{equation} 
where $u\in H^1(\Omega)$ is a solution of $\nabla \cdot (A_0\nabla u)=0$ in $\Omega$ with $u=\phi$ on $\partial \Omega$. 

\begin{thm}
The Fr\'echet derivative $ \left. dQ_A(\phi)\right\vert_{A = A_{0}}$ is injective.
\end{thm}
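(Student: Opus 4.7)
The plan is to reduce the anisotropic constant-coefficient problem to Calder\'on's classical isotropic one via a linear change of variables, and then to recycle his exponential-trick argument verbatim. Since $A_0$ is a constant symmetric positive definite matrix, it admits a symmetric positive definite square root $A_0^{1/2}$. Setting $y = A_0^{-1/2} x$ maps $\Omega$ diffeomorphically onto $\tilde\Omega := A_0^{-1/2}(\Omega)$, and a direct application of the chain rule together with $A_0^{-1/2} A_0 A_0^{-1/2} = I$ yields the two key identities
\begin{equation*}
\nabla \cdot (A_0 \nabla_x u)(x) = (\Delta_y \tilde u)(y), \qquad A_0 \nabla_x u \cdot \nabla_x u = |\nabla_y \tilde u|^2,
\end{equation*}
where $\tilde u(y) := u(A_0^{1/2} y)$. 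Combining with $\dx = \sqrt{\det A_0}\,\dx[y]$ transforms the Fr\'echet derivative \eqref{dQ1} into
\begin{equation*}
dQ_A(\phi)\big|_{A=A_0} = \sqrt{\det A_0}\int_{\tilde\Omega} \tilde\delta(y)\,|\nabla_y \tilde u(y)|^2 \dx[y],
\end{equation*}
with $\tilde\delta(y) := \delta(A_0^{1/2} y)$ and $\tilde u$ harmonic in $\tilde\Omega$. Since $\phi \mapsto \tilde\phi := \phi\circ A_0^{1/2}$ is an isomorphism of the respective $H^{1/2}$ trace spaces, every harmonic function on $\tilde\Omega$ arises this way.

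Injectivity is thus equivalent to the statement: if $\int_{\tilde\Omega}\tilde\delta\,|\nabla\tilde u|^2\dx[y] = 0$ for every harmonic $\tilde u$ on $\tilde\Omega$, then $\tilde\delta\equiv 0$. By polarization (applied to the real and imaginary parts of complex-valued harmonic functions), this is equivalent to
\begin{equation*}
\int_{\tilde\Omega} \tilde\delta(y)\,\nabla \tilde u_1 \cdot \nabla \tilde u_2 \dx[y] = 0 \quad \text{for all harmonic } \tilde u_1, \tilde u_2.
\end{equation*}
Following Calder\'on, for each $k \in \R^2 \setminus\{0\}$ let $k^\perp$ be the $90^\circ$ rotation of $k$ (so $|k^\perp|=|k|$ and $k\cdot k^\perp = 0$), and choose the null complex exponentials
\begin{equation*}
\tilde u_1(y) = e^{\pi(k^\perp + i k)\cdot y}, \qquad \tilde u_2(y) = e^{\pi(-k^\perp + i k)\cdot y}.
\end{equation*}
Both are harmonic because each frequency vector $\alpha$ satisfies $\alpha\cdot\alpha = 0$, and a short computation gives $\nabla \tilde u_1 \cdot \nabla \tilde u_2 = -2\pi^2 |k|^2 e^{2\pi i k\cdot y}$.

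Extending $\tilde\delta$ by zero outside $\tilde\Omega$, the vanishing condition becomes $|k|^2 \widehat{\tilde\delta}(-k) = 0$ for every $k\neq 0$, where $\widehat{\tilde\delta}$ denotes the Fourier transform. Since $\tilde\delta\in L^\infty$ has compact support it lies in $L^1$, so $\widehat{\tilde\delta}$ is continuous on $\R^2$, hence $\widehat{\tilde\delta}\equiv 0$ and Fourier inversion forces $\tilde\delta\equiv 0$; pulling back by $A_0^{1/2}$ gives $\delta\equiv 0$ in $\Omega$, completing the proof.

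The argument is essentially routine once the linear change of coordinates is in place; the only point demanding care is verifying the transformation formulas and confirming that the correspondence $\phi\leftrightarrow\tilde\phi$ is bijective on traces, which is immediate because $A_0^{1/2}$ is a bi-Lipschitz linear isomorphism. No genuine quasi-conformal machinery is needed here since $A_0$ has constant entries; the reduction is purely affine. I anticipate no substantive obstacle.
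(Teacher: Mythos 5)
Your proof is correct, and it takes a genuinely more elementary route than the paper for the reduction step. The paper invokes the quasi-conformal/Beltrami machinery: it extends $A_0$ and $\delta$ to $\C$, cites the existence of a quasi-conformal map $\Phi^{(A_0)}$ solving $\overline{\D}\Phi^{(A_0)}=\mu_{A_0}\D\Phi^{(A_0)}$ with pushforward $\Phi^{(A_0)}_*A_0=\sqrt{\det A_0}\,I_2$, and only then substitutes Calder\'on's null exponentials $e^{\pi i(\xi\cdot y)\pm\pi(b\cdot y)}$ in the transformed domain to conclude $\widehat{\delta\circ\Phi^{-1}}=0$. You observe instead that, because $A_0$ is a \emph{constant} symmetric positive definite matrix, the affine map $y=A_0^{-1/2}x$ already does the job: it sends $\nabla\cdot(A_0\nabla u)$ to $\Delta_y\tilde u$, sends $A_0\nabla u\cdot\nabla u$ to $|\nabla_y\tilde u|^2$, and produces exactly the same pushforward $\sqrt{\det A_0}\,I_2$ (indeed your map is a concrete affine realization of the paper's quasi-conformal map in this constant-coefficient case). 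From there your polarization step, the choice of exponentials with null frequency vectors $\pm k^\perp+ik$, the computation $\nabla\tilde u_1\cdot\nabla\tilde u_2=-2\pi^2|k|^2e^{2\pi ik\cdot y}$, and the continuity-of-$\widehat{\tilde\delta}$ argument are all sound and match the paper's endgame. What the paper's heavier formulation buys is consistency with the rest of the article: the same quasi-conformal map reappears in the reconstruction algorithm of Sections 4--5 (where it is computed numerically from the Beltrami equation), and that framework extends to non-constant background tensors, whereas your affine reduction is specific to constant $A_0$ --- which is, however, all the theorem requires. Your version is shorter, self-contained, and avoids the extension-to-$\C$ and existence citations entirely; no gap to report.
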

\begin{proof}

In order to prove that $\left. dQ_A(\phi)\right|_{A = A_{0}}$ is injective, we only need to show that 
\begin{align*}
\int_\Omega \left(\delta(x) A_0\nabla u \right) \cdot \nabla u\,dx=0 \quad  \text{ implies that }\quad \delta \equiv 0,
\end{align*}
where $u \in H^1(\Omega)$ is a solution of $L_{A_0}u=0$ in $\Omega$ with $u=\phi$ on $\D \Om$. On the other hand, since the last integral in \eqref{dQ1} vanishes for all such $u$, then it is equivalent to prove 
\begin{equation}\label{inj}
\int_\Omega ((\delta(x) A_0) \nabla u_1)\cdot \nabla u_2\ dx = 0,
\end{equation} 
where $u_1$, $u_2\in H^1(\Omega)$ are solutions of $L_{A_0}u_1 = L_{A_0}u_2 = 0$ in $\Omega$.  Inspired by \cite{calderon2006inverse}, we want to find special exponential solutions to prove it.

In order to achieve our aim, we utilize the celebrated quasi-conformal map in the plane. We first identify $\R^2$ with the complex plane $\C$. Recall that $\delta(x)\in  C^1(\overline{\Omega})$ since the conductivity $A(x)$ is a $C^1(\overline{\Omega})$ matrix function. We now extend $\delta(x)$ to $\C$ (still denoted by $\delta(x)$) with $\delta(x)\in C^1_0 (\C)$ by considering $\delta(x)\equiv 0$ for $|x|>r$, for some large constant $r>0$ such that $\Omega \Subset B_r(0)$. Meanwhile, we also extend the constant matrix $A_0$ to $\C$, denoted by $A_0(x)$, such that $A_0(x)\in C^1(\C)$ with $A_0(x)=I_2$ (a $2\times2$ identity matrix) for $|x|>r$.

From \cite[Lemma 3.1]{astala2005calderons} and \cite[Theorem 2.1]{takuwa2008complex}, given a $C^1$-smooth anisotropic conductivity $A(x)$, it is known that there exists a $C^1$ bijective map $\Phi^{(A)}:\mathbb R^2 \to \mathbb R^2$ with $y=\Phi^{(A)}(x)$ such that 
\begin{align}\label{scalar conductivity}
\Phi^{(A)}_*A = \left(\det A\circ (\Phi^{(A)})^{-1}\right)^{1/2}I_2,
\end{align}
is a scalar conductivity, where  
\begin{align}\label{transformation_formula}
\Phi^{(A)}_*A(y)=\left. \dfrac{\nabla \Phi^{(A)}(x)A(x)\nabla (\Phi^{(A)})^T(x)}{\det (\nabla \Phi^{(A)}(x))}\right|_{x=(\Phi^{(A)})^{-1}(y)}, 
\end{align}
with 
\begin{align*}
&(\Phi^{(A)}_* A)^{i\ell}(y) \\
:=&\dfrac{1}{\det (\nabla \Phi^{(A)})}\sum _{j,k=1}^2\D_{x^j}(\Phi^{(A)})^i(x) \D_{x^k}(\Phi^{(A)})^\ell(x) A^{jk}(x)\Big\vert_{x=(\Phi^{(A)})^{-1}(y)}.
\end{align*}
Moreover, $\Phi^{(A)}$  solves the following Beltrami equation in the complex plane $\mathbb C$, 
\begin{align} \label{Belt}
\overline{\D}\Phi^{(A)}=\mu_A \D \Phi^{(A)},
\end{align}
where 
\begin{align}\label{muA coeff}
\mu _A=\dfrac{A^{22}-A^{11}-2iA^{12}}{A^{11}+A^{22}+2\sqrt{\det A}},
\end{align}
and 
\begin{align*}
\overline{\D}=\dfrac{1}{2}(\D_{x_1}+i\D_{x_2}),\quad \D=\dfrac{1}{2}(\D_{x_1}-i\D_{x_2}).
\end{align*}
Here we point out that the coefficient $\mu_A$ defined in \eqref{muA coeff} is supported in $B_r\subset \C$, since $\delta(x)=0$ and $A_0$ was extended $C^1$-smoothly to the identity matrix $I_2$ for the same domain $|x|>r$.

Next, for the given anisotropic tensor $A_0$, we can find a corresponding Beltrami equation for $A_0$. We do the same extension of $A_0$ as before, such that $A_0(x)\in \C^1(\C)$ with $A_0(x)=I_2$ for $|x|>r$ (for the same large number $r>0$). Now, from the representation formulas \eqref{scalar conductivity} and \eqref{transformation_formula}, we know that there exists a quasi-conformal map $\Phi^{(A_0)}$ such that
\begin{align}\label{transformation formula 2}
\widetilde A_0:=&\Phi^{(A_0)}_* A_0=\left. \dfrac{\nabla \Phi^{(A_0)}(x)A_0\nabla (\Phi^{(A_0)})^T(x)}{\det (\nabla \Phi^{(A_0)}(x))}\right|_{x=(\Phi^{(A_0)})^{-1}(y)}=\sqrt{\det A_0}I_2 
\end{align}  
with $\det A_0>0$. Furthermore, $\Phi^{(A_0)}$ solves the Beltrami equation 
$$
\overline{\D}\Phi^{(A_0)}=\mu_{A_0} \D \Phi^{(A_0)},
$$
where 
\begin{align}\label{muA_0 coeff}
\mu _{A_0}=\dfrac{A_0^{22}-A^{11}-2iA_0^{12}}{A^{11}+A_0^{22}+2\sqrt{\det A_0}}.
\end{align}
Similarly, we also have that $\mu_{A_0}$ is supported in the same disc $B_r$ since $A_0=I_2$ for $|x|>r$. 
Note that $\delta=\delta(x)$ is a scalar function, then by using the formula \eqref{transformation_formula} and \eqref{transformation formula 2}, one can see that 
\begin{align*}
\Phi^{(A_0)}_* (\delta(x) A_0)=&\left. \dfrac{\nabla \Phi^{(A_0)}(x)\left(\delta(x)A_0\right)\nabla (\Phi^{(A_0)})^T(x)}{\det (\nabla \Phi^{(A_0)}(x))}\right|_{x=(\Phi^{(A_0)})^{-1}(y)} \\
=&\delta(x)|_{x=(\Phi^{(A_0)})^{-1}(y)}\sqrt{\det A_0}I_2.
\end{align*}
Let $\Phi\equiv\Phi^{(A_0)}$, $\widetilde \Om:=\Phi(\Omega)$ and $\widetilde u_j(y):=u_j\circ (\Phi^{-1}(y)$ for $j=1,2$, by using \eqref{inj} and change of variables $y=\Phi(x)$ via the quasi-conformal map, then we obtain that
\begin{align}\label{transf-inj}
\int_{\widetilde \Omega}\Phi_*(\delta(x) A_0) \nabla_y \widetilde u_1 \cdot \nabla_y \widetilde u_2 \, dy=\int_\Omega ((\delta(x) A_0) \nabla u_1)\cdot \nabla u_2 \, dx =0,
\end{align}
where $\widetilde u_j$ are solutions of 
\begin{align}\label{A0_equation}
L_{\widetilde A_0}\widetilde u_1=L_{\widetilde A_0}\widetilde u_2=0 \text{ in }\widetilde \Omega.
\end{align}


In fact, \eqref{A0_equation} is equivalent to the Laplace equation $\Delta _y\widetilde u_j=0$ in $\widetilde \Omega$ for $j=1,2$ because $\widetilde A_0=\sqrt{\det A_0}I_2$ with $\det A_0$ being a positive constant. Based on Calder\'on's constructions \cite{calderon2006inverse}, we can consider two special exponential solutions in the transformed space as follows. Let $\xi \in \mathbb R^2$ be an arbitrary vector and $b\in \mathbb R^2$ such that $\xi \cdot b=0$ and $|\xi |=|b|$, then one can define 
\begin{align}\label{exponential solution}
\widetilde u_1(y):=e^{\pi i(\xi \cdot y)+\pi (b\cdot y)}\quad\text{ and }\quad \widetilde u_2(y):=e^{\pi i(\xi \cdot y)-\pi (b\cdot y)},
\end{align}
and it is easy to check that $\widetilde u_1$ and $\widetilde u_2$ are solutions of Laplace's equation. By substituting these exponential solutions \eqref{exponential solution} into \eqref{transf-inj}, one has 
\begin{align*}
2\pi |\xi |^2\int _{\widetilde \Omega} \left(\delta\circ \Phi^{-1}(y)\right) \sqrt{\det A_0}\,e^{2\pi i \xi \cdot y}\, dy=0, \text{ for any }\xi \in \mathbb R^2,
\end{align*}
which implies that $\delta=0$, due to the positivity of $\det A_0$. This proves the assertion.
\end{proof}

\begin{rem}
It is worth mentioning that 
\begin{itemize}
\item[(a)] Due to the remarkable quasi-conformal mapping in the plane, one can reduce the anisotropic conductivity equation into an isotropic one. This method helps us to develop the reconstruction algorithm for the anisotropic conductivity equation proposed by Calder\'on \cite{calderon2006inverse}.

\item[(b)] The method fails when the space dimension $n\geq 3$, because there are no suitable exponential-type solutions for the anisotropic case. For the three-dimensional case, we do not have complex geometrical optics solutions but we have another exponential solution, which is called the oscillating-decaying solution (see \cite{lin2014reconstruction}). 
\end{itemize}
\end{rem}

\section{The linearized reconstruction method}\label{sec:3}

Since the tensor of anisotropy $A_0$ is known {\em a priori}, we can now transform the problem to the isotropic case, reconstruct the transformed conductivity on the transformed domain using Calder\'on's method on an arbitrary domain as in \cite{Peter_2017}, and then use the quasi-conformal map to transform the conductivity back to the original one.  Recall that from the definitions of our choices of extensions for $A_0$ and $\delta(x)$, the representation formulations of \eqref{muA coeff} and \eqref{muA_0 coeff} yield that 
$$
\mu_A = \mu_{A_0} \text{ in }\C, \quad \text{ and }\quad \mu_A=\mu_{A_0}=0 \text{ for }|x|>r>0.
$$
Thus, without loss of generality, we may assume that $\Phi^{(A_0)} = \Phi^{(A)}$ by using $\mu_{A_0}=\mu_A$. In the rest of this article, we simply denote the quasi-conformal map by $y=\Phi(x)\equiv\Phi^{(A_0)}(x) = \Phi^{(A)}(x)$.
By utilizing the change of variables via the quasi-conformal map, we also have that
$$
\int_{\partial \Omega} u_1 \left( \Lambda_A u_2 \right) \, dS = \int_{\partial \widetilde{\Omega}} \widetilde{u}_1 \left( \Lambda _{\widetilde{A}} \widetilde{u}_2\right)\, dS,
$$ 
where $\widetilde{\Omega}=\Phi(\Omega) $, $\widetilde{A}(y)=\Phi^{(A)}_*A(y) $, defined in  \eqref{transformation_formula}, and $\widetilde u_j = u_j\circ \Phi^{-1}(y)$ for $j=1,2$.

Since the DN data is preserved under the quasi-conformal map (i.e., change of variables), we can reconstruct the scalar conductivity $\tilde{a}(x)$ from Calder\'on's method as in \cite{Peter_2017}, and then map it back.
Defining Calder\'on's linearized bilinear form for the isotropic problem by 
\begin{eqnarray}
B(\phi_1,\phi_2) = \int_{\partial \Omega} w_1 \left(\Lambda _A w_2 \right)\, dS, 
\label{equB}
\end{eqnarray}
$w_1|_{\partial \Omega} = \phi_1 = u_1|_{\partial \Omega}$ and  $w_2|_{\partial \Omega} = \phi_2 = u_2|_{\partial \Omega}$,
this can be computed directly from our measured data.  Calder\'on proved \cite{calderon2006inverse} that the Fourier transform of an isotropic conductivity can be decomposed into two terms, one of which is negligible for small perturbations.  Thus, denoting the Fourier transform of a function $f$ by $\widehat{f}$, by \cite{calderon2006inverse} we can write
\begin{align}
	\widehat{\tilde{a}}(z)=\widehat{F}(z)+R(z)\label{gammahat},
\end{align}
where 
\begin{align}\label{gammaFandR}
\begin{split}
\widehat{\tilde{a}}(z) =&-\frac{1}{ 2 \pi^2 |z|^2} \int_{\tilde{\Omega}} (1+\tilde{\delta}(x)) \nabla u_1\cdot \nabla u_2 \, dx \\
= &-\frac{1}{ 2 \pi^2 |z|^2}\int_{\tilde{\Omega}} \tilde{a}(x)  e^{2\pi i(z\cdot x)}\,dx \\
\widehat{F}(z) =& -\frac{1}{2\pi^2|z|^2}B \left( e^{i\pi(z\cdot x)+\pi(b\cdot x)}, e^{i\pi(z\cdot x)-\pi(b\cdot x)}\right)
\\
R(z) =&\frac{1}{2\pi^2|z|^2} \int_{\tilde{\Omega}} \tilde{\delta} (\nabla u_1 \cdot \nabla v_2+ \nabla v_1 \cdot \nabla u_2) +(1+\tilde{\delta}) \nabla v_1 \cdot \nabla v_2 \, dx.
\end{split}
\end{align}
 For small $|z|$, the term $R(z)$ is small when the perturbation $\tilde{\delta}$ is small in magnitude and is to be neglected in numerical implementation.
Thus, the isotropic conductivity can be approximated by the inverse Fourier transform of $\widehat{F}(z)$. Since the deformed domain $\tilde{\Omega}$ is not circular, we adopt the algorithm introduced in \cite{Peter_2017}, in order to compute the function $\widehat{F}(z)$. 
We will first invert  $\widehat F(z)$
numerically to obtain a reconstruction of the scalar isotropic conductivity $\widetilde
A= \Phi_*{A(x)}$ on $\widetilde{\Omega}=\Phi^{(A_0)}\Omega $. 
Next, we compute the mapping $\Phi^{(A_0)}$ by solving the Beltrami equation \eqref{Belt}.  We then pull back the scalar conductivity from the deformed coordinates to the original coordinates by applying $(\Phi^{(A_0)})^{-1}$ to $\tilde{a}$ to obtain $a(x)$.   Finally, we obtain the anisotropic conductivity $A(x)$ by multiplying $a(x)$ by the known matrix $A_0$. 

%

\section{Numerical implementation}\label{sec:4}

\subsection{Numerical solution of the forward problem for data simulation}\label{sec_forward}

A finite element method (FEM) implementation of the complete electrode model (CEM) \cite{Somersalo} for EIT was developed for data simulation. We first provide the equations of the CEM.  Assume the anisotropic conductivity $A$ satisfies Hypothesis \ref{hypothesis of A}.
Then it satisfies the anisotropic generalized Laplace equation 
\begin{equation}
\nabla \cdot (A \nabla u) = \sum_{j,k = 1}^{2} \frac{\D}{\D x^j}\left( A^{jk} (x) \frac{\D u}{\D x^k} \right)= 0 \text{ in } \Omega.
\end{equation}
The boundary conditions for the CEM with $L$ electrodes are defined as follows.  The current $I_{l}$ on the $l$ the electrode is given by
\begin{equation}
 \int_{e_l} \sum_{j,k = 1}^{2} \left( A^{jk} (x) \frac{\D u}{\D x^k} \right) \, dS= I_{l},  \qquad  l = 1,2,...,L, 
\end{equation}
\begin{equation*}
\sum_{j,k = 1}^{2} \left( A^{jk} (x) \frac{\D u}{\D x^k} \right) = 0 \qquad \text{off} \qquad \bigcup_{l = 1}^{L} e_l,
\end{equation*}
where $e_l$ is the region covered by the $l$-th electrode, and $\nu$ is the outward normal to the surface of the body.
The voltage on the boundary is given by 
\begin{equation}
    u + z_{l} \sum_{j,k = 1}^{2} \left( A^{jk} (x) \frac{\D u}{\D x^k} \right) = U_l \qquad {\text{on}} \quad e_l \qquad \text{for} \quad l = 1,2,...,L,
 \end{equation}
 where $z_l$ is the {\em contact impedance} corresponding to the $l$-th electrode.
For a unique solution to the forward problem, one must specify the choice of ground,
\begin{equation}\label{ground}
    \sum_{l = 1}^{L} U_l = 0,
\end{equation}
and the current must satisfy Kirchhoff's Law:
\begin{equation}
    \sum_{l = 1}^{L}  I_l = 0.
\end{equation}

Denote the potential inside the domain $\Omega$ by $u$ or $v$ and the voltages on the boundary by $U$ or $V$. The variational formulation of the complete electrode model is given by 
\begin{equation}\label{var}
B_s((u, U), (v,V)) = \sum_{l =1}^L I_l \bar{V}_l,
\end{equation}
where  $v \in H^1(\Omega)$ and $ V \in \mathbb{C}^L$, and the sesquilinear form $B_s : H \times H \rightarrow \mathbb{C} $ is given by 
\begin{equation}
B_s((u, U), (v,V)) = \int_{\Omega} A \nabla u \cdot \nabla \bar{v}\, dx\, dy\, +\, \sum_{l =1}^L \frac{1}{z_l} \int_{e_l} (u -U_l) (v -\bar{V}_l) \, dS.
\end{equation}

Discretizing the variational problem leads to the finite element formulation. The domain $\Omega$ is discretized into small triangular elements with $N$ nodes in the mesh. Suppose $(u,U)$ is a solution to the complete electrode model with an orthonormal basis of current patterns $\varphi_k$. Then a finite dimensional approximation to the voltage distribution inside $\Omega$ is given by:
\begin{equation}\label{voltinsideapp}
u^{h} (z) = \sum_{k =1}^{N}  \alpha_k \varphi_k(z),
\end{equation}
and on the electrodes by
\begin{equation}\label{voltonelecapp}
U^{h} (z) = \sum_{k = N+1}^{N+(L-1)} \beta_{(k-N)} \vec{n}_{(k-n)},
\end{equation}
where discrete approximation is indicated by $h$ and the basis functions for the finite dimensional space $\mathcal{H} \subset H^{1} (\Omega)$ is given by $\varphi_{k}$, and $\alpha_{k}$ and $\beta_{(k-N)}$ are the coefficients to be determined.  Let
\begin{equation}
\vec{n}_{j} = (1,0,...,0,-1,0,...0)^T \in \R^{L \times 1},
\end{equation} 
where  $-1$ is in the $(j-1)$st position. 
The choice of $ \vec{n}_{(k-N)} $ satisfies the condition for a choice of ground in \eqref{ground},  since $\vec{n}_{(k-N)}$ in \eqref{voltonelecapp} results in 
\begin{equation}
U^{h}(z) = \Bigg ( \sum_{k =1}^{L-1} \beta_k, -\beta_1,... , -\beta_{L-1} \Bigg) ^T.
\end{equation}
In order to implement the FEM computationally we need to expand \eqref{var} using approximating functions \eqref{voltinsideapp} and \eqref{voltonelecapp} with $v =\varphi_{j}$ for $ j = 1,2,...N$ and $V = \vec{n}_{j} $ for $ j = N+1, N+2, ..... N+(L-1)$ to get a linear system 
\begin{equation}\label{linsys}
M \vec{b} = \vec{f},
\end{equation} 
where $\overrightarrow{b} =(\overrightarrow{\alpha},\overrightarrow{\beta})^T \in \mathbb{C}^{N+L-1} $ with the vector $\overrightarrow{\alpha} = (\alpha_1,\alpha_2,.....,\alpha_N) $ and the vector $\overrightarrow{\beta} = (\beta_1,\beta_2,.....\beta_{L-1}) $, and the matrix $G \in \mathbb{C}^{(N+L-1)} $ is of the form

\begin{equation}
M=\left ( \begin{array}{c c}
B & C\\
\tilde{C} & D\end{array} \right )
\end{equation} 

The right-hand-side vector is given by 
\begin{equation}
\overrightarrow{f} = ({\bf 0}, \tilde{I})^T,
\end{equation}
where ${\bf 0} \in \mathbb{C}^{1\times N} $ and $\tilde{I} = (I_1 -I_2, I_1 - I_3,....I_1-I_L) \in \mathbb{C}^{1\times (L-1)}  $. The entries of $\overrightarrow{\alpha} $ represent the voltages throughout the domain, while those of $\overrightarrow{\beta}$ are used to find the voltages on the elctrodes by
\begin{equation}
U^h = \mathcal{C} \overrightarrow{\beta}
\end{equation}
where $\mathcal{C} $ is the $L\times (L-1) $ matrix
\begin{equation}
\mathcal{C} = \left( \begin{matrix}
1 & 1 & 1 & \ldots & 1\\
-1 & 0 & 0 & \ldots & 0\\
0 & -1 & 0 & \ldots  & 0\\
&  &          \ddots          \\
0 & 0 & 0 & \ldots & -1 \end{matrix} \right).
\end{equation}

The entries of the block matrix $B$ are determined in each of the following cases:

\vspace{2mm}

\noindent $\bullet$ Case (i) {$\bf 1 \leq k,j \leq N$}. 

\vspace{2mm}
In this case $u^h \neq 0, U^h = 0, v \neq 0 $, but $V = 0 $. The sesquilinear form can be simplified to  
\begin{equation}
B_s((u^h, U^h), (v,V)) := \int_{\Omega} A \nabla u^h \cdot \nabla \bar{v} \, dx + \sum_{l =1}^L \frac{1}{z_l} \int_{e^l} u^h \bar{v} \, dS = 0.
\end{equation}

Thus, the $(k,j) $ entry of the block matrix $B $ becomes,
\begin{equation}\label{Bmat}
B_{kj} = \int_\Omega A \nabla \phi_k \cdot \nabla \overline{\phi}_j \,dx +\sum_{l=1}^{L} \frac{1}{z_l} \int_{e_l} \phi_k \overline{\phi}_j \, dS.
\end{equation}

The entries of the block matrix $C $ are determined as follows:

\vspace{2mm}

\noindent $\bullet$ Case (ii) {$ \bf 1 \leq k \leq N, N+1 \leq j \leq N+(L-1) $}. 

\vspace{2mm}
 
In this case $u^h \neq 0, U^h = 0, v =0 $, and $V \neq 0 $.  The sesquilinear form simplifies to  
\begin{equation}
B_s ((u^h, 0),(0,V)):= - \sum_{l = 1}^{L} \frac{1}{z_l} \int_{e^l} u^h \bar{V}_l \, dS  = I_1 - I_{j+1}
\end{equation}
Therefore, entries of C matrix becomes,
\begin{equation}\label{Cmat}
C_{kj} = - \bigg[ \frac{1}{z_l} \int_{e_l} \varphi_{k}(\overrightarrow{n}_{j})_{l} \, dS \bigg]
\end{equation}

\vspace{2mm}

\noindent $\bullet$ Case (iii) The entries of the block matrix $\tilde{C} $ are determined as follows: 
 
\vspace{2mm}

 For {$\bf N \leq k \leq N+(L-1), 1 \leq j \leq N$}. Here $u^h = 0, U^h \neq 0, v \neq 0, V =0 $. The expression for sesquilinear is 
\begin{equation}
B_s ((0,U^h),(v,0)):= - \sum_{l = 1}^{L} \frac{1}{z_l} \int_{e^l} U^h \bar{v}_l \, dS= 0
\end{equation}

Thus the {\it kj} entry of the $\tilde{C} $ is 
\begin{equation}
\tilde{C} = - \bigg[ \sum_{l =1}^{L} \int_{e_l} \overline{\varphi}_j \, dS - \frac{1}{z_l
+1} \int_{e_j +1} \overline{\varphi}_{j+1}\,  dS \bigg]
\end{equation}

\vspace{2mm}

\noindent $\bullet$ Case (iv) The entries of the block matrix $D $ are determined as follows:

\vspace{2mm}

For ${\bf N \leq k, j\leq N+(L-1)} $. Here $u^h =0, U^h \neq 0, v = 0, V \neq 0 $ The sequilinear form is given by
\begin{equation}
B_s ((0, U^h),(0,V)):=  \sum_{l = 1}^{L} \frac{1}{z_l} \int_{e^l} U^h \bar{V}_l \,  dS = I_1 - I_{j+1}
\end{equation}

Thus the entries of matrix D is given by 
\begin{equation}\label{dmat}
D_{kj} = \begin{cases}
      \frac{ \vert e_1 \vert}{z_1} + \frac{ \vert e_{j+1} \vert}{z_{j+1}}, &  j = k-N\\
 \frac{\vert e_1 \vert}{z_1},  & j \neq k-N.
\end{cases}
\end{equation}

Solving \eqref{linsys} gives us the coefficients $\beta_{(k-N)} $ required for the voltages $ U^h $ on the electrodes.

\subsection{Computing the quasi-conformal map} \label{subsec:quasi_conf}

A discrete approximation to the quasi-conformal map $\Phi^{(A_0)}$ was computed by solving the Beltrami equation \eqref{Belt} by the method referred to as {\em Scheme 1} in \cite{Gaidashev2008}. 
Let $T[h]$ denote the Hilbert transform of a function $h$
\begin{equation}
T[h] = \frac{i}{2\pi}\lim_{\epsilon\rightarrow 0}\int\int_{\C \setminus B(z,\epsilon)}\frac{h(\xi)}{(\xi - z)^2}d\bar{\xi}\wedge  \, d\xi,
\end{equation}
and $P[h]$ denote the Cauchy transform of  $h$
\begin{equation}
P[h] = \frac{i}{2\pi}\int\int_{\C}\frac{h(\xi)}{\xi - z}  \frac{h(\xi)}{\xi}-  d\bar{\xi}\wedge d\xi.
\end{equation}
 It is shown in \cite{Gaidashev2008} that a solution to equation \eqref{Belt}  can be computed as follows.
\begin{itemize}
\item[1.]  Begin with an initial guess $h^0$ to the solution of the equation
$$h^* = T[\mu h^*]+T[\mu]$$
\item[2.]  Compute the iterates
$$ h^{n+1} = T[\mu h^n]+T[\mu]$$
until the method converges to within a specified tolerance, and denote the solution by $h^*$.
\item[3.]  Compute the solution $f(z)$ from
$$f(z) = P[\mu(h^*+1)](z) + z$$
\end{itemize}
The iterates converge since the map $h \mapsto T[\mu h]$ is a contraction in $L_p(\C)$, $p>2$ by the extended version of the Ahlfors-Bers-Boyarskii theorem \cite{Gaidashev2008, Ahlfors1960}.
\subsection{Reconstruction}
Once we compute $w = f(z)$ for all discretized values of $z \in \Omega$, by using the reconstruction algorithm in \cite{Peter_2017} and the data from the forward modeling in subsection \ref{sec_forward}, we compute the isotropic conductivity $\tilde{\sigma}(w) = \tilde{\sigma}(f(z))$. Note that Calder\'on's method is a pointwise reconstruction algorithm. Then we get $\sigma(z) = \tilde{\sigma}(f(z))$. 

\section{Examples}
In this section we illustrate the method on a radially symmetric discontinuous conductivity on the unit disk with high and low contrast.  This simple example was chosen to illuminate the features of the reconstruction and facilitate comparison with the reconstructions by the D-bar method and Calder\'on's method in \cite{PropertiesPaper} of an isotropic conductivity of the same nature.

We consider four different conductivity tensors $A_0$ for the background on the unit disk $\Omega$:
\begin{align*}
A_0^1 = 
\left ( \begin{array}{c c}
1 & 0\\
0 & 1.3\end{array} \right), \quad 
A_0^2 = 
\left ( \begin{array}{c c}
1.3 & 0\\
0 & 1\end{array} \right), \quad 
A_0^3 = 
\left ( \begin{array}{c c}
1 & 0\\
0 & 4\end{array} \right), \quad 
A_0^4 = 
\left ( \begin{array}{c c}
4 & 0\\
0 & 1\end{array} \right).
\end{align*}
The tensor $A_0^1$ corresponds to $\mu = 0.0655$, $A_0^2$ corresponds to $\mu = -0.0655$,  $A_0^3$ corresponds to $\mu = 0.3333$, and $A_0^4$ corresponds to $\mu = -0.3333$, rounded to four digits after the decimal.  The images of  $\Omega$  under the quasi-conformal mapping computed by the method in Section \ref{subsec:quasi_conf} with initial guess $h^0= \mu{A_0}$ are found in Figure \ref{OmegaTilde}.
\begin{figure}[h!]  
\centering
\includegraphics[width=.35\textwidth]{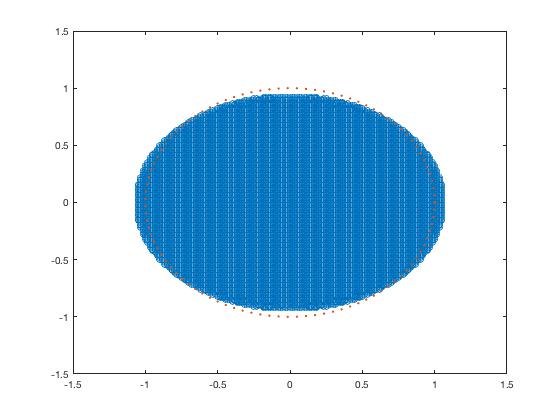} \hfil \includegraphics[width=.35\textwidth]{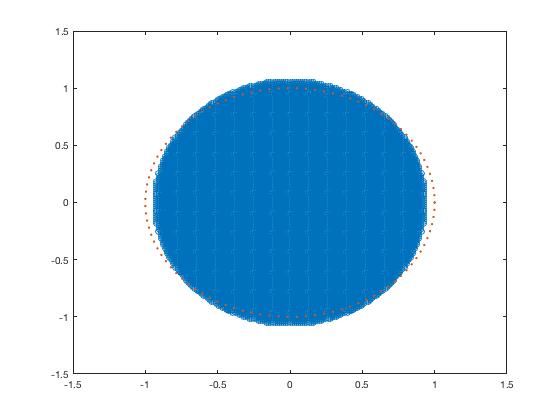} \\
\includegraphics[width=.35\textwidth]{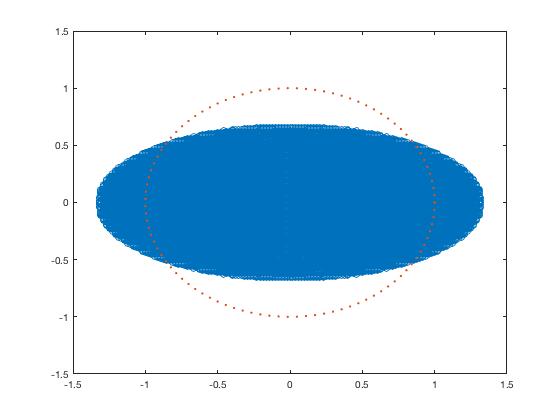} \hfil \includegraphics[width=.35\textwidth]{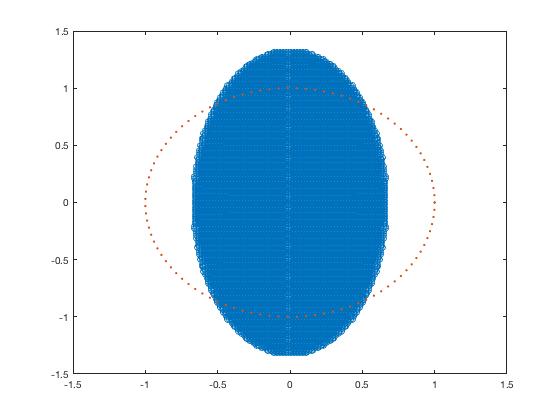}
\caption{The unit disk $\Omega$ and its image  $\tilde{\Omega}$ (in blue) under the computed quasi-conformal mapping $\Phi^{(A_0)}$ for $A_0^1$ (upper left), $A_0^2$ (upper right), $A_0^3$ (lower left), $A_0^4$ (lower right).}
\label{OmegaTilde}
\end{figure}

The discontinuous radially symmetric isotropic conductivity $\sigma$ is defined by 
$$\sigma_M(x) = \left\{
\begin{array}{ll}
 1, & 0.5<|x|\leq 1\\
M, & 0\leq |x| <0.5 
\end{array} \right.$$

Four anisotropic conductivity distributions were then constructed by defining $A_1(x) \equiv \sigma_{1.3}(x)A_0^1$, $A_2(x) \equiv \sigma_{1.3}(x)A_0^2$, $A_3(x) \equiv \sigma_{4}(x)A_0^3$, and $A_4(x) \equiv \sigma_{4}(x)A_0^4$.  Voltage data was simulated by the method described in Section \ref{sec:4} 
with  trigonometric current patterns defined by
\begin{equation}
T_\ell^k=\left\{
\begin{array}{ll}
\cos(k\theta_\ell), &  k=1,...,\frac{L}{2} \\
\sin\big((k-\frac{L}{2})\theta_\ell\big), &  k=\frac{L}{2}+1,...,L-1,
\end{array}\right.
\label{eq:currpatt}
\end{equation}
where $T_\ell^k$ specifies the current amplitude injected on electrode $\ell$ located at angular position $\theta_\ell$ for the $k$th pattern.

Reconstructions of $\sigma_{1.3}(x)$ with background tensor  $ A_0^1$ and $ A_0^2$ are found in Figure \ref{fig:recons_low_contrast}, and reconstructions of $\sigma_{4}(x)$ with background tensor  $ A_0^3$ and $ A_0^4$ are found in Figure \ref{fig:recons_high_contrast}.

\begin{figure}[H]  
\centering
\includegraphics[width=.35\textwidth]{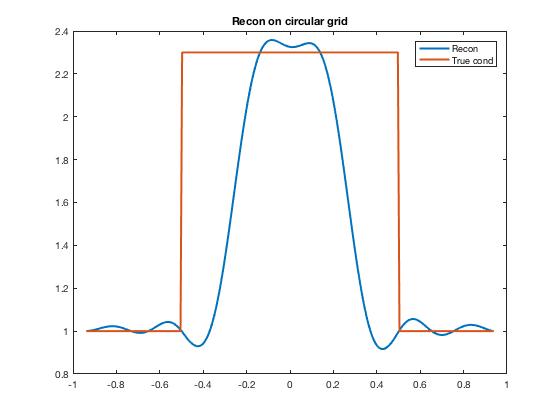} \hfil \includegraphics[width=.35\textwidth]{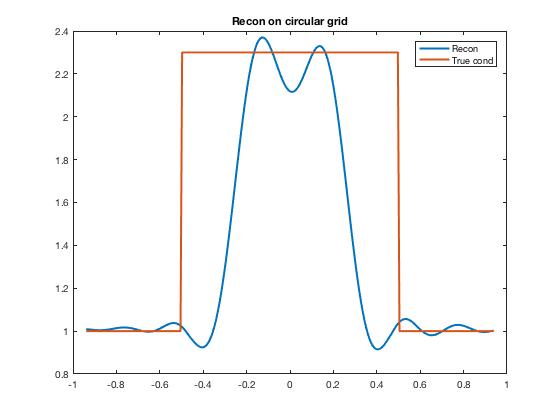} \\
\includegraphics[width=.35\textwidth]{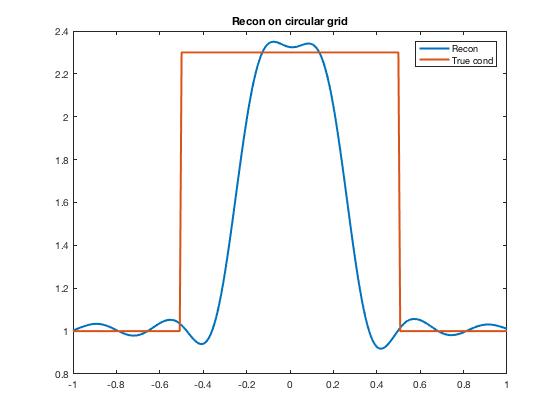} \hfil \includegraphics[width=.35\textwidth]{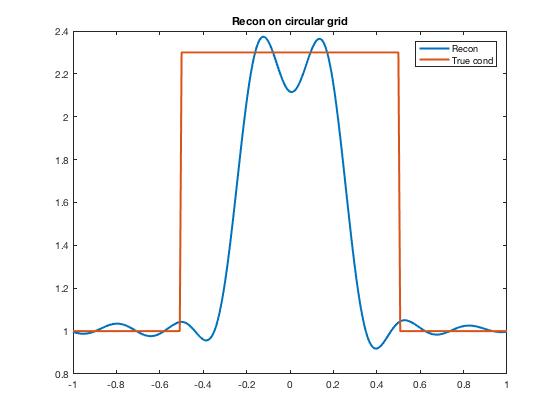}
\caption{Top row:  Cross sections along the x-axis of reconstructions of $\sigma_{1.3}(x)$ where $A_0 = A_0^1$ computed by Calder\'on's method for anisotropic conductivities  with truncation radius $R=1.8$ (left) and $R=2.0$ (right). Bottom row:  Cross sections along the x-axis of reconstructions of $\sigma_{1.3}(x)$ where $A_0 = A_0^2$ computed by Calder\'on's method for anisotropic conductivities with truncation radius $R=1.8$ (left) and $R=2.0$ (right).  }
\label{fig:recons_low_contrast}
\end{figure}

\begin{figure}[h!]  
\centering
\includegraphics[width=.35\textwidth]{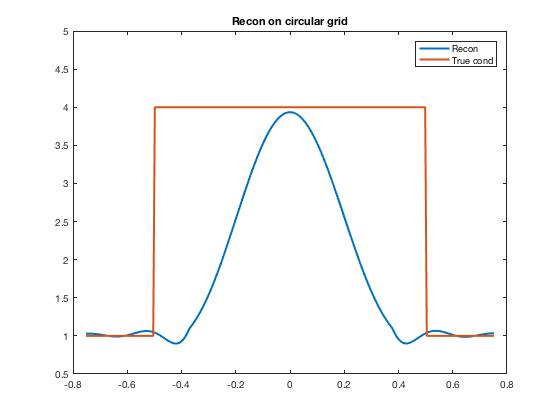} \hfil \includegraphics[width=.35\textwidth]{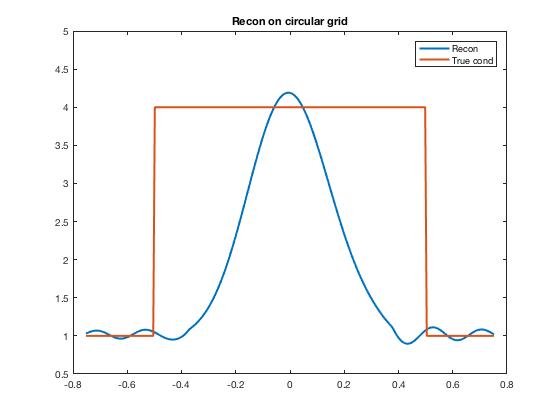} \\
\includegraphics[width=.35\textwidth]{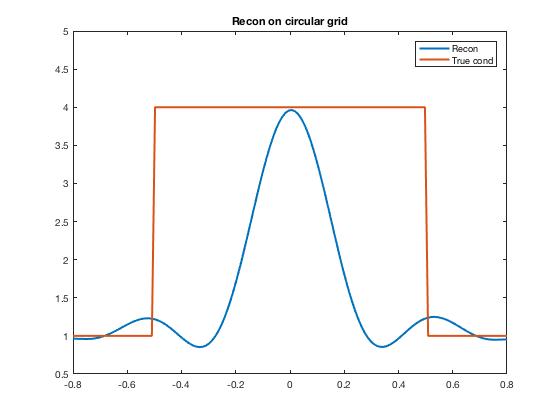} \hfil \includegraphics[width=.35\textwidth]{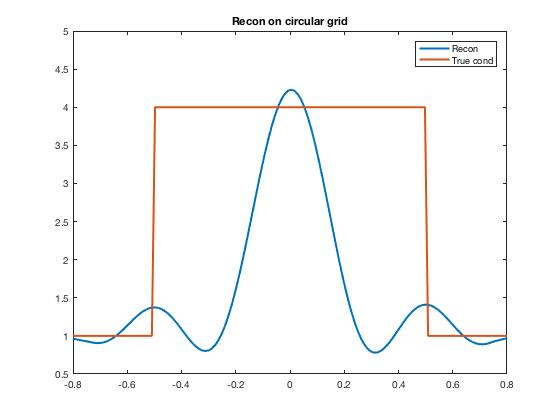}
\caption{Top row:  Cross sections along the x-axis of reconstructions of $\sigma_{4}(x)$ where $A_0 = A_0^3$ computed by Calder\'on's method for anisotropic conductivities with truncation radius $R=2.0$ (left) and $R=2.3$ (right). Bottom row:  Cross sections along the x-axis of reconstructions of $\sigma_{4}(x)$ where $A_0 = A_0^4$ computed by Calder\'on's method for anisotropic conductivities with truncation radius $R=2.0$ (left) and $R=2.3$ (right).  }

\label{fig:recons_high_contrast}
\end{figure}

%
\section{Conclusions}
A direct reconstruction algorithm for reconstructing the multiplicative scalar field for 2-D anisotropic conductivities with known entries for the background anisotropic tensors was presented based on Calder\'on's linearized method for isotropic conductivities.  The quasi-conformal map was used to prove injectivity of the linearized problem in the plane.  The map facilitates the reduction of the anisotropic problem to an isotropic problem that can then be solved by Calder\'on's method on the image of the original domain under the mapping, and pulled back to obtain the multiplicative scalar field.  The method is demonstrated on simple radially symmetric conductivities with jump discontinuity of high and low contrast.
Further work is needed to determine the method's practicality for more complicated conductivity distributions and experimental data.
Also, the method presented here is not applicable to the three-dimensional case.

\section*{Acknowledgment}

The project was supported by Award Number R21EB024683 from the National Institute Of Biomedical Imaging And Bioengineering.  The content is solely the responsibility of the authors and does not necessarily represent the official view of the National Institute Of Biomedical Imaging And Bioengineering or the National Institutes of Health. 
Y.-H. L.  is partially supported by the Ministry of Science and Technology Taiwan, under the Columbus Program: MOST-109-2636-M-009-006, 2020-2025.

\bibliographystyle{plain}
\bibliography{icebi_refs,ref,ref1,AnisotropyBib}

\begin{thebibliography}{10}

\bibitem{abascal2008use}
Juan-Felipe~PJ Abascal, Simon~R Arridge, David Atkinson, Raya Horesh, Lorenzo
  Fabrizi, Marzia De~Lucia, Lior Horesh, Richard~H Bayford, and David~S Holder.
\newblock Use of anisotropic modelling in electrical impedance tomography;
  description of method and preliminary assessment of utility in imaging brain
  function in the adult human head.
\newblock {\em Neuroimage}, 43(2):258--268, 2008.

\bibitem{abascal2011electrical}
Juan-Felipe~PJ Abascal, William~RB Lionheart, Simon~R Arridge, Martin
  Schweiger, David Atkinson, and David~S Holder.
\newblock Electrical impedance tomography in anisotropic media with known
  eigenvectors.
\newblock {\em Inverse problems}, 27(6):065004, 2011.

\bibitem{agnelli2020classification}
Juan~Pablo Agnelli, Aynur Col, Matti Lassas, Rashmi Murthy, Matteo
  Santacesaria, and Samuli Siltanen.
\newblock Classification of stroke using neural networks in electrical
  impedance tomography, 2020.

\bibitem{Ahlfors1960}
Lars Ahlfors and Lipman Bers.
\newblock Riemann's mapping theorem for variable metrics.
\newblock {\em Annals of Mathematics}, 72(2):385--404, 1960.

\bibitem{ahlfors2006lectures}
Lars~Valerian Ahlfors.
\newblock {\em Lectures on quasiconformal mappings}, volume~38.
\newblock American Mathematical Soc., 2006.

\bibitem{alessandrini2001determining}
Giovanni Alessandrini and Romina Gaburro.
\newblock Determining conductivity with special anisotropy by boundary
  measurements.
\newblock {\em SIAM Journal on Mathematical Analysis}, 33(1):153--171, 2001.

\bibitem{astala2005calderons}
Kari Astala, Matti Lassas, and Lassi P{\"a}iv{\"a}rinta.
\newblock Calder{\'o}ns' inverse problem for anisotropic conductivity in the
  plane.
\newblock {\em Communications in Partial Difference Equations},
  30(1-2):207--224, 2005.

\bibitem{astala2006calderon}
Kari Astala and Lassi P{\"a}iv{\"a}rinta.
\newblock Calder{\'o}n's inverse conductivity problem in the plane.
\newblock {\em Annals of Mathematics}, pages 265--299, 2006.

\bibitem{barber1990quantification}
DC~Barber.
\newblock Quantification in impedance imaging.
\newblock {\em Clinical Physics and Physiological Measurement}, 11(4A):45,
  1990.

\bibitem{bikowski20082d}
Jutta Bikowski and Jennifer~L Mueller.
\newblock 2{D} {EIT} reconstructions using {C}alderon's method.
\newblock {\em Inverse Problems \& Imaging}, 2(1):43--61, 2008.

\bibitem{Breckon}
W.~{Breckon}.
\newblock The problem of anisotropy in electrical impedance tomography.
\newblock In {\em 1992 14th Annual International Conference of the IEEE
  Engineering in Medicine and Biology Society}, volume~5, pages 1734--1735,
  1992.

\bibitem{brown2003electrical}
Brian~H Brown.
\newblock Electrical impedance tomography (eit): a review.
\newblock {\em Journal of medical engineering \& technology}, 27(3):97--108,
  2003.

\bibitem{calderon2006inverse}
Alberto~P Calder{\'o}n.
\newblock On an inverse boundary value problem.
\newblock {\em Computational \& Applied Mathematics}, 25(2-3):133--138, 2006.

\bibitem{cheney1999electrical}
Margaret Cheney, David Isaacson, and Jonathan~C Newell.
\newblock Electrical impedance tomography.
\newblock {\em SIAM review}, 41(1):85--101, 1999.

\bibitem{cherepenin20013d}
Vladimir Cherepenin, A~Karpov, A~Korjenevsky, V~Kornienko, A~Mazaletskaya,
  D~Mazourov, and D~Meister.
\newblock A 3d electrical impedance tomography (eit) system for breast cancer
  detection.
\newblock {\em Physiological measurement}, 22(1):9, 2001.

\bibitem{Martins2019}
Thiago de~Castro~Martins, Andr{\'e}~Kubagawa Sato, Fernando~Silva de~Moura,
  Erick Dario~Bueno de~Camargo, Olavo~Luppi Silva, Talles Batista~Rattis
  Santos, Zhanqi Zhao, Knut Moeller, Marcelo Brito~Passos Amato, Jennifer~L.
  Mueller, Raul~Gonzalez Lima, and Marcos de~Sales Guerra~Tsuzuki.
\newblock A review of electrical impedance tomography in lung applications:
  Theory and algorithms for absolute images.
\newblock {\em Annual Reviews in Control}, 2019.

\bibitem{Malone2014}
Malone E, Jehl M, Arridge S, Betcke T, and Holder D.
\newblock Stroke type differentiation using spectrally constrained
  multifrequency eit: evaluation of feasibility in a realistic head model.
\newblock {\em Physiol Meas.}, 35(6):1051--1066, 2014.

\bibitem{frerichs2009assessment}
In{\'e}z Frerichs, Sven Pulletz, Gunnar Elke, Florian Reifferscheid, Dirk
  Sch{\"a}dler, Jens Scholz, and Norbert Weiler.
\newblock Assessment of changes in distribution of lung perfusion by electrical
  impedance tomography.
\newblock {\em Respiration}, 77(3):282--291, 2009.

\bibitem{Boverman}
Boverman G, Kao TJ, Wang X, Ashe JM, Davenport DM, and Amm BC.
\newblock Detection of small bleeds in the brain with electrical impedance
  tomography.
\newblock {\em Physiol Meas.}, 37(6):727--750, 2016.

\bibitem{Gaidashev2008}
D.~Gaidashev and D.~Khmelev.
\newblock On numerical algorithms for the solution of a {B}eltrami equation.
\newblock {\em SIAM Journal on Numerical Analysis}, 46(5):2238--2253, 2008.

\bibitem{gilbarg2015elliptic}
David Gilbarg and Neil~S Trudinger.
\newblock {\em Elliptic partial differential equations of second order}.
\newblock Second edition, Springer, 1983.

\bibitem{Glidewell2D}
M.~{Glidewell} and K.~T. {Ng}.
\newblock Anatomically constrained electrical impedance tomography for
  anisotropic bodies via a two-step approach.
\newblock {\em IEEE Transactions on Medical Imaging}, 14(3):498--503, 1995.

\bibitem{Glidewell3D}
M.~E. {Glidewell} and K.~T. {Ng}.
\newblock Anatomically constrained electrical impedance tomography for
  three-dimensional anisotropic bodies.
\newblock {\em IEEE Transactions on Medical Imaging}, 16(5):572--580, 1997.

\bibitem{greenleaf2003anisotropic}
Allan Greenleaf, Matti Lassas, and Gunther Uhlmann.
\newblock Anisotropic conductivities that cannot be detected by eit.
\newblock {\em Physiological measurement}, 24(2):413, 2003.

\bibitem{HamiltonReyes2016}
S.~Hamilton, J.~Reyes, S.~Siltanen, and X.~Zhang.
\newblock A hybrid segmentation and d-bar method for electrical impedance
  tomography.
\newblock {\em SIAM Journal on Imaging Sciences}, 9(2):770--793, 2016.

\bibitem{hanke2003recent}
Martin Hanke and Martin Br{\"u}hl.
\newblock Recent progress in electrical impedance tomography.
\newblock {\em Inverse Problems}, 19(6):S65, 2003.

\bibitem{1344192}
{Jin Keun Seo}, {Ohin Kwon}, H.~{Ammari}, and {Eung Je Woo}.
\newblock A mathematical model for breast cancer lesion estimation: electrical
  impedance technique using ts2000 commercial system.
\newblock {\em IEEE Transactions on Biomedical Engineering}, 51(11):1898--1906,
  2004.

\bibitem{kao20063d}
Tzu-Jen Kao, D~Isaacson, JC~Newell, and GJ~Saulnier.
\newblock A 3d reconstruction algorithm for eit using a handheld probe for
  breast cancer detection.
\newblock {\em Physiological measurement}, 27(5):S1, 2006.

\bibitem{KS2014}
Manas Kar and Mourad Sini.
\newblock Reconstruction of interfaces using {CGO} solutions for the {M}axwell
  equations.
\newblock {\em J. Inverse Ill-Posed Probl.}, 22(2):169--208, 2014.

\bibitem{KSU2011}
Carlos~E. Kenig, Mikko Salo, and Gunther Uhlmann.
\newblock Inverse problems for the anisotropic {M}axwell equations.
\newblock {\em Duke Math. J.}, 157(2):369--419, 2011.

\bibitem{kerner2002electrical}
Todd~E Kerner, Keith~D Paulsen, Alex Hartov, Sandra~K Soho, and Steven~P
  Poplack.
\newblock Electrical impedance spectroscopy of the breast: clinical imaging
  results in 26 subjects.
\newblock {\em IEEE transactions on medical imaging}, 21(6):638--645, 2002.

\bibitem{knudsen2008reconstructions}
Kim Knudsen, Matti Lassas, Jennifer Mueller, and Samuli Siltanen.
\newblock Reconstructions of piecewise constant conductivities by the d-bar
  method for electrical impedance tomography.
\newblock In {\em Journal of Physics: Conference Series}, volume 124, page
  012029. IOP Publishing, 2008.

\bibitem{PropertiesPaper}
M.~Knudsen, K.~andLassas, J.~L. Mueller, and S.~Siltanen.
\newblock D-bar method for electrical impedance tomography with discontinuous
  conductivities.
\newblock {\em SIAM Journal on Applied Mathematics}, 67(3):893--913, 2007.

\bibitem{kohn1983identification}
Robert~V Kohn and Michael Vogelius.
\newblock Identification of an unknown conductivity by means of measurements at
  the boundary.
\newblock Technical report, MARYLAND UNIV COLLEGE PARK LAB FOR NUMERICAL
  ANALYSIS, 1983.

\bibitem{KLS2015enclosure}
Rulin Kuan, Yi-Hsuan Lin, and Mourad Sini.
\newblock The enclosure method for the anisotropic maxwell system.
\newblock {\em SIAM Journal on Mathematical Analysis}, 47(5):3488--3527, 2015.

\bibitem{lin2014reconstruction}
Yi-Hsuan Lin.
\newblock Reconstruction of penetrable obstacles in the anisotropic acoustic
  scattering.
\newblock {\em Inverse problems and imaging}, 10(3):765--780, 2016.

\bibitem{Lionheart_2010}
William R~B Lionheart and Kyriakos Paridis.
\newblock Finite elements and anisotropic {EIT} reconstruction.
\newblock {\em Journal of Physics: Conference Series}, 224:012022, apr 2010.

\bibitem{lionheart2004eit}
William~RB Lionheart.
\newblock Eit reconstruction algorithms: pitfalls, challenges and recent
  developments.
\newblock {\em Physiological measurement}, 25(1):125, 2004.

\bibitem{lionheart1997conformal}
WRB Lionheart.
\newblock Conformal uniqueness results in anisotropic electrical impedance
  imaging.
\newblock {\em Inverse problems}, 13(1):125, 1997.

\bibitem{MuellerBook}
J.~L. Mueller and S.~Siltanen.
\newblock {\em Linear and Nonlinear Inverse Problems with Practical
  Applications}.
\newblock {SIAM}, Philadelphia, PA, 2012.

\bibitem{Mueller2020}
Jennifer~L Mueller and Samuli Siltanen.
\newblock The d-bar method for electrical impedance tomography---demystified.
\newblock {\em Inverse Problems}, 2020.

\bibitem{Muller1}
P.~A. Muller, D.~Isaacson, J.~C. Newell, and G.~J. Saulnier.
\newblock {Calder\'{o}n's method on an elliptical domain}.
\newblock {\em Physiol. Meas.}, 34(6):609--622, June 2013.

\bibitem{nakamura2007identification}
G~Nakamura and K~Yoshida.
\newblock Identification of a non-convex obstacle for acoustical scattering.
\newblock {\em Journal of Inverse and Ill-posed Problems jiip}, 15(6):611--624,
  2007.

\bibitem{NUW2005(ODS)}
Gen Nakamura, Gunther Uhlmann, and Jenn-Nan Wang.
\newblock Oscillating-decaying solutions, {R}unge approximation property for
  the anisotropic elasticity system and their applications to inverse problems.
\newblock {\em J. Math. Pures Appl. (9)}, 84(1):21--54, 2005.

\bibitem{NUW2006}
Gen Nakamura, Gunther Uhlmann, and Jenn-Nan Wang.
\newblock Oscillating-decaying solutions for elliptic systems.
\newblock In {\em Inverse problems, multi-scale analysis and effective medium
  theory}, volume 408 of {\em Contemp. Math.}, pages 219--230. Amer. Math.
  Soc., Providence, RI, 2006.

\bibitem{Peter_2017}
J.~L.~Mueller P.~A.~Muller and M.~M. Mellenthin.
\newblock Real-time implementation of {C}alder{\'{o}}n's method on
  subject-specific domains.
\newblock {\em IEEE Transactions on Medical Imaging}, 36(9):1868--1875, 2017.

\bibitem{panoutsos2007electrical}
George Panoutsos, Mahdi Mahfouf, Brian~H Brown, and Gary~H Mills.
\newblock Electrical impedance tomography (eit) in pulmonary measurement: A
  review of applications and research.
\newblock In {\em Proceedings of the 5th IASTED International Conference:
  biomedical engineering, Innsbruck, Austria}, pages 221--230, 2007.

\bibitem{SW2006}
Mikko Salo and Jenn-Nan Wang.
\newblock Complex spherical waves and inverse problems in unbounded domains.
\newblock {\em Inverse Problems}, 22(6):2299--2309, 2006.

\bibitem{SY2012reconstruction}
Mourad Sini and Kazuki Yoshida.
\newblock On the reconstruction of interfaces using complex geometrical optics
  solutions for the acoustic case.
\newblock {\em Inverse Problems}, 28(5): 055013, 2012.

\bibitem{Somersalo}
Erkki Somersalo, Margaret Cheney, and David Isaacson.
\newblock Existence and uniqueness for electrode models for electric current
  computed tomography.
\newblock {\em SIAM Journal on Applied Mathematics}, 52(4):1023--1040, 1992.

\bibitem{takuwa2008complex}
Hideki Takuwa, Gunther Uhlmann, and J-N Wang.
\newblock Complex geometrical optics solutions for anisotropic equations and
  applications.
\newblock {\em Journal of Inverse and Ill-posed Problems}, 16(8):791--804,
  2008.

\bibitem{UW2008}
Gunther Uhlmann and Jenn-Nan Wang.
\newblock Reconstructing discontinuities using complex geometrical optics
  solutions.
\newblock {\em SIAM J. Appl. Math.}, 68(4):1026--1044, 2008.

\bibitem{victorino2004imbalances}
Josu{\'e}~A Victorino, Joao~B Borges, Valdelis~N Okamoto, Gustavo~FJ Matos,
  Mauro~R Tucci, Maria~PR Caramez, Harki Tanaka, Fernando~Suarez Sipmann,
  Durval~CB Santos, Carmen~SV Barbas, et~al.
\newblock Imbalances in regional lung ventilation: a validation study on
  electrical impedance tomography.
\newblock {\em American journal of respiratory and critical care medicine},
  169(7):791--800, 2004.

\bibitem{yerworth2003electrical}
Rebecca~J Yerworth, RH~Bayford, B~Brown, P~Milnes, M~Conway, and David~S
  Holder.
\newblock Electrical impedance tomography spectroscopy (eits) for human head
  imaging.
\newblock {\em Physiological measurement}, 24(2):477, 2003.

\bibitem{zou2003review}
Y~Zou and Z~Guo.
\newblock A review of electrical impedance techniques for breast cancer
  detection.
\newblock {\em Medical engineering \& physics}, 25(2):79--90, 2003.

\end{thebibliography}

\end{document}